\newtheorem{thm}{Theorem}[section]
\newtheorem{theorem}{Theorem}[section]
\newtheorem{lem}[thm]{Lemma}
\newtheorem{prop}[thm]{Proposition}
\theoremstyle{definition}
\numberwithin{equation}{section}
\begin{document}
	
	\title{Computing modular Galois representations for small $\ell$}
	
	\author{Peng Tian\\
		Department of Mathematics\\
		East China University of Science and Technology\\
		200237, Shanghai, P. R. China}
	
	
	\maketitle
	
	

	

	\renewcommand{\thefootnote}{\arabic{footnote}}
	\setcounter{footnote}{0}

 {\begin{center}
\parbox{14.5cm}{\begin{abstract}
 In this paper we describe an algorithm for computing mod $\ell$ Galois representations associated to modular forms of weight $k$ when $\ell <k-1$. As applications, we use this algorithm to explicitly compute the cases with $\Delta_{k}$ for $k=16,20, 22, 26$ and all the unexceptional primes $\ell$ with $\ell <k-1$.\vspace{-3mm}
\end{abstract}}\end{center}}



\renewcommand{\baselinestretch}{1.2}


\section{Introduction}

	In the book \cite{book}, S. J. Edixhoven, J.-M. Couveignes, et al. proposed a polynomial time algorithm to compute the mod $\ell$ Galois representations $\rho_{f,\ell}$ associated to level one modular forms $f \in S_{k}(SL_2(\mathbb{Z}))$. In fact  $\rho_{f,\ell}$ can be described by a certain polynomial $P_{f,\ell}\in \mathbb{Q}[x]$ of degree $\ell^{2}-1$ whose splitting field is the fixed field of the kernel ker($\rho_{f,\ell}$). This algorithm has been generalized to forms of arbitrary levels by Bruin \cite{bruin}. Moreover the associated projective representation   $\tilde{\rho}_{ f,\ell}$ can be described as the splitting field of a suitable polynomial $\tilde P_{f,\ell}\in \mathbb{Q}[x]$ of degree $\ell+1$. 
	
	Let $\Delta_{k}$ be the unique cusp form of level $1$ and weight $k$ with $k=12,16,18,20,22,26$. In practice, this algorithm has been first implemented by J. Bosman \cite[Chapter 7]{book} to evaluate  $\tilde P_{\Delta_{k},\ell}$ for $\ell\le 23$ and $k\le \ell+1$. Recently in \cite{mascot}  and \cite{peng}, this algorithm has been improved and more polynomials $\tilde P_{\Delta_{k},\ell}$ have been explicitly computed when $\ell\le43$.
	
	In the book \cite{book}, the authors dealed with the case with $\ell<k-1$ by twisting the representations and then boil down to the cases with $k\le\ell+1$. In fact, for a form of level one and weight $k$ with $\ell<k-1$, in \cite[Proposition 2.5.18]{book}  they showed a method to obtain a form of weight $k'\le \ell+1$ such that the two  Galois representations associated the two forms are isomorphic. In practice, however, no one has implemented the algorithm to calculate the polynomials  for the cases with $\ell<k-1$. 
	
	In this paper, we shall discuss the algorithm for computing mod $\ell$ Galois representations associated to modular forms of weight $k$ when $\ell <k-1$.
	
	In Section \ref{sec:modlforms}, we first show  a generalization of Sturm bound theorem \cite[Theorem 2]{sturm} to mod $\ell$ modular forms, which gives an explicit method to identify two forms by the coefficients of the $q$-expansions.  Then in Section \ref{sec:algorithm},  we use this result to present a method, for a form of type  $(N,k,\varepsilon )$ with $\ell<k-1$, to obtain a form of $(N,k',\varepsilon )$ with $k'\le \ell+1$ such that the two  Galois representations associated the two forms are isomorphic. In fact this is a generalization of \cite[Proposition 2.5.18]{book} to not only level one forms but ones with arbitrary levels. Consequently, it suffices for us to do explicit calculations only for the cases with $k\le \ell+1$.  Finally, for purpose of practical calculations,  in Section \ref{sec:algorithm}, we prove the corresponding results for the projective representations and then present the algorithm for the projective case.
	
	In the last section, we apply the algorithm in Section 3 to do explicit computations for $\Delta_{k}$ with $k=16,20, 22, 26$ and all the unexceptional primes $\ell$ with $\ell <k-1$. We first show the $\theta$-twist forms in Table \ref{table:twistforms}  and then  obtain the projective polynomials $\tilde P_{\Delta_{k},\ell}(x)$ associated to the mod $\ell$ projective Galois representation  $\tilde{\rho}_{ \Delta_{k}}$ which are shown in the Table \ref{table:polynomials}. 
	
	All the explicit  computations of this paper have been done in the open source software SAGE \cite{sage}.

\section{Mod $\ell$ modular forms } \label{sec:modlforms}
Throughout this paper, we suppose $\ell\ge5$ to be a prime and denote $\overline{\mathbb{F}}_{\ell}$  the algebraic closure of $\mathbb{F}_{\ell}$.

\subsection{Modular forms of type $(N,k,\varepsilon)$}  \label{subsec:definition}

The mod $\ell$ modular forms are first developed by J-P. Serre \cite{serresd1} and  H. P. F. Swinnerton-Dyer \cite{sd1}, and generalized by N. M. Katz \cite{katzmodlforms}. In this subsection we give a brief review of the theory of mod $\ell$ modular forms. For the details, we refer to \cite{gross} and \cite[Section 2]{ediweight}.

Let $\ell$ be a prime and $N\ge1$ be prime to $\ell$.  The congruence subgroup $\varGamma _{1}(N)$ of level $N$ is
\begin{center}
$\varGamma _{1}(N) = \left\lbrace 
\begin{pmatrix}
a & b\\
c & d
\end{pmatrix}  \in SL(2,\mathbb{Z}) \ \mid \ c \equiv 0 \ mod \ N , \  \ \ a\equiv d\equiv 1 \ mod \ N
\right\rbrace.$
\end{center}

Let $X_{1}(\ell)$ be the modular curve associated to $\varGamma_{1}(\ell)$. Let $k>0$ be an even integer. Let $E$ be a generalized elliptic curve over a scheme $S$ and $\alpha:(\mathbb{Z}/N\mathbb{Z})_{S}\hookrightarrow E$ be an embdedding of group schemes. Then we have an invertible sheaf $\omega_{E/S}$ of relative differentials $\varOmega^{1}_{E/S}$ and zero section $0$:

$$\omega_{E/S}:= 0^{*} \varOmega^{1}_{E/S}.$$
Then a modular form $f$ of type $(N,k)$ over $\overline{\mathbb{F}}_{\ell}$ is a law, compatible with   Cartesian squares, that assigns to each pair $(E,\alpha)$ a section of $\omega_{E/S}^{\otimes k}$.

If $N>4$, then  modular forms $f$ of type $(N,k)$ are the global sections of the invertible sheaf $\underline{\omega}^{\otimes k}$ on the modular curve $X_{1}(N)_{\overline{\mathbb{F}}_{\ell}}$. If $N\le4$, then modular forms $f$ of type $(N,k)$ are the sections of $\underline{\omega}^{\otimes k}$ over the complement of some points in the curve $X_{1}(N)_{\overline{\mathbb{F}}_{\ell}}$.

The $q$-expansions of mod $\ell$ modular forms $f$ at cusp $\infty$ of $\varGamma_{1}(N)$ have been given by evaluating $f$ on $(E_{q},\alpha)$, where $q=e^{2\pi iz}$ and $E_{q}$ is the Tate curve over $\overline{\mathbb{F}}_{\ell}[[q]](q^{-1})$. More precisely, the $q$-expansions of $f$ at $\infty$ is the the power series $f(E_{q},\alpha)/(dt/t)^{\otimes k}\in\overline{\mathbb{F}}_{\ell}[[q]]$, where $dt/t$ is the standard differential on $E_{q}$. This in fact coincides with the usual $q$-expansions of modular forms, since $(E_{q},\alpha)$ corresponds to a neighbourhood of the cusp $\infty$ in the completed up half plane $\mathcal{H}^{*}=\mathcal{H}\cup \mathbb{Q} \cup \infty$, where $\mathcal{H}$ is the up half plane. As usual, we denote the $n$-th coefficient of the $q$-expansion by $a_{n}(f)$.

 Let $\varepsilon:(\mathbb{Z}/(N\ell)\mathbb{Z})^{\ast}\rightarrow \overline{\mathbb{F}}_{\ell}$ be a Dirichlet character. Define an action of $\mathbb{Z}/(N\ell)\mathbb{Z})^{\ast}$ on mod $\ell$ form $f$ by
 $$(\langle a\rangle^{*})(E/S,\alpha)=f(E/S,a\alpha),  \ \ \ a\in \mathbb{Z}/(N\ell)\mathbb{Z})^{\ast}.$$
 A modular form $f$ of type $(N,k)$ is called  a form of type $(N,k,\varepsilon)$ if it satisfies 
 $$(\langle a\rangle^{*})(E/S,\alpha)=\varepsilon(a)f.$$

One can also define Hecke operators $T_{p}$ that are coincide with the usual Hecke operators. For instance, we have that all the $T_{p}$ commute with each other and the eigenvalues determine the $q$-expansions of $f$ up to a constant factor.

 A modular form $f$ is  called cusp form if $a_{0}(f)=0$. A modular form $f$ of type $(N,k,\varepsilon)$ is said to be an eigenform if it is an eigenvector for all the Hecke operators $T_{p}$ with $p \nmid N\ell$. An eigenform $f$ is said to be normalized if $a_{1}(f)=1$.

\subsection{Operator $\theta$ and Hasse invariant $A$}
Let $\theta=q\frac{d}{dq}$  be the classical differential operator. If $f$ is an eigenform of type $(N,k,\varepsilon)$, in \cite[Section 2.1]{katz}, it is shown that $\theta f$ is an eigenform of type $(N,k+\ell+1,\varepsilon)$.

Let $A$ be the Hasse invariant of the Tate curve $E_{q}$ over $\overline{\mathbb{F}}_{\ell}[[q]](q^{-1})$, then we have
\begin{lem}\label{hasseinvariant}
\quad The Hasse invariant $A$ is given by $A=(dt/t)^{\otimes \ell-1}$. Hence $A$ is a mod $\ell$ modular form of type $(1,\ell-1,1)$
\end{lem}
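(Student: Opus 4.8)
The plan is to compute the Hasse invariant directly from its intrinsic definition and then read off its $q$-expansion from the Tate uniformization. Recall that for an elliptic curve $E$ over a base $S$ of characteristic $\ell$, the relative Frobenius $F:E\to E^{(\ell)}$ induces an $\ell$-linear endomorphism of the line $H^{1}(E,\mathcal{O}_{E})$; under the Serre-duality identification $H^{1}(E,\mathcal{O}_{E})\cong\omega_{E/S}^{-1}$ this endomorphism is the same datum as a section of $\omega_{E/S}^{\otimes(\ell-1)}$, which by definition is the Hasse invariant $A$. Dually, $A$ records the action of the Cartier operator $C$ on the invariant differential, in the sense that $A(E,\omega_{0})=\omega_{0}^{\otimes(\ell-1)}$ holds precisely when $C$ fixes a generator $\omega_{0}$ of $H^{0}(E,\varOmega^{1}_{E/S})$. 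Since $A$ is defined functorially in $(E,\alpha)$ and its formation commutes with base change, to determine its $q$-expansion it suffices to evaluate $A(E_{q},\,dt/t)$ on the Tate curve, where $dt/t$ is the standard invariant differential.

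First I would pass to the Tate uniformization $E_{q}=\mathbb{G}_{m}/q^{\mathbb{Z}}$, under which $dt/t$ is the image of the logarithmic differential $du/u=d\log u$ on $\mathbb{G}_{m}$, with $u$ the coordinate. The computation then reduces to the behaviour of the Cartier operator on $du/u$. Using the defining rules $C(x^{i}\,dx)=0$ for $i\not\equiv-1\pmod{\ell}$ and $C(x^{\ell m-1}\,dx)=x^{m-1}\,dx$, applied to $du/u=u^{-1}\,du$ (so that $i=-1$, $m=0$), one obtains $C(du/u)=du/u$; that is, logarithmic differentials are fixed by $C$. Descending this identity to $E_{q}$ gives $C(dt/t)=dt/t$, and hence $A(E_{q})=(dt/t)^{\otimes(\ell-1)}$. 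Equivalently, the $q$-expansion of $A$ is the constant $1$.

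For the assertion about the type I would argue in three steps. The weight equals $\ell-1$ because $A$ is, by construction, a global section of $\underline{\omega}^{\otimes(\ell-1)}$. The level is $N=1$ because the Hasse invariant is attached to the bare curve $E$ alone and its formation requires no choice of level structure $\alpha$, so it defines a form of type $(1,\ell-1)$. Finally, the character is trivial: since $A(E,a\alpha)=A(E,\alpha)$ for every $a$ (the construction ignores $\alpha$), the diamond action $\langle a\rangle^{*}$ fixes $A$, which forces $\varepsilon=1$. The one genuinely delicate point is the reduction carried out in the first paragraph, namely matching the intrinsic Frobenius/Cartier definition of $A$ with the explicit normalization that makes $A(E_{q},dt/t)$ equal to $(dt/t)^{\otimes(\ell-1)}$ on the nose rather than merely up to a nonzero scalar; once the Cartier operator is pinned down on logarithmic differentials, however, this normalization is forced and the remaining steps are formal.
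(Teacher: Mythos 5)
Your proof is correct, but it is genuinely different in character from what the paper does: the paper's entire proof is a one-line citation to Gross (Proposition 1.9~c) of the reference \cite{gross} in the bibliography), whereas you give a self-contained argument. Your route --- realizing $A$ as the $\ell$-linear Frobenius action on $H^{1}(E,\mathcal{O}_{E})$, dualizing to the Cartier operator on invariant differentials, and then computing $C(du/u)=du/u$ on $\mathbb{G}_{m}$ so that $C(dt/t)=dt/t$ on the Tate curve --- is in fact the standard proof underlying the cited result, so in substance you have unwound the reference rather than found a new argument. What the citation buys the paper is brevity and the outsourcing of exactly the normalization issue you flag (whether $A(E,\omega_{0})$ equals $\lambda$ or $\lambda^{\ell}$ when $C\omega_{0}=\lambda\omega_{0}$); what your version buys is that the reader sees why the $q$-expansion is $1$, and conveniently the eigenvalue here is $\lambda=1$, so every reasonable normalization gives the same answer and your ``delicate point'' is harmless. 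The one step you should tighten is the phrase ``descending this identity to $E_{q}$'': in characteristic $\ell$ the uniformization $E_{q}=\mathbb{G}_{m}/q^{\mathbb{Z}}$ is rigid-analytic, not a quotient of schemes, so the clean justification is either that the Cartier operator is a local operator compatible with \'etale maps (and the uniformization is locally an isomorphism), or that the formal group of the Tate curve is $\hat{\mathbb{G}}_{m}$ with $dt/t$ corresponding to the standard logarithmic differential; either remark closes the gap, and the rest of your argument (weight $\ell-1$ from $A$ being a section of $\underline{\omega}^{\otimes(\ell-1)}$, level $1$ and trivial character because the construction never uses the level structure $\alpha$) is exactly right.
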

\begin{proof}\quad  This is the Proposition 1.9 $c)$ in \cite{gross}.
\end{proof}
From this lemma, we know the $q$-expansion of $A$ is $1$. For two forms of types $(N,k_{1},\varepsilon)$ and $(N,k_{2},\varepsilon)$, respectively, with $k_{1}\equiv k_{2} \mod \ell-1$, then we can view the two forms as of the some type by multiplying one form by suitable powers of $A$. This can be used to prove the following lemma which is a generalization of Sturm bound theorem to modular forms of different weights.

\begin{prop}  \label{differentweights}\quad
	Let $f_{1}$ and $f_{2}$ be two normalized eigenforms of type $(N,k_{1},\varepsilon)$ and $(N,k_{2},\varepsilon)$, respectively. Let $k=max\{k_{1},k_{2}\}$. Suppose that $k_{1}\equiv k_{2} \mod \ell-1$ and  $a_{m}(f_{1}) = a_{m}(f_{2})$ in $\overline{\mathbb{F}}_{\ell}$ for all $m$ with  $m \le \frac{k[SL_{2}(\mathbb{Z}):\varGamma_{1}(N)]}{12}$. Then  $f_{1} = f_{2}$.
\end{prop}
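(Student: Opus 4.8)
The plan is to reduce to the equal-weight case by multiplying the form of smaller weight by a suitable power of the Hasse invariant $A$, and then to invoke Sturm's theorem \cite[Theorem 2]{sturm} for forms of a single fixed weight.

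First I would assume, without loss of generality, that $k_{1}\le k_{2}$, so that $k=k_{2}$. Since $k_{1}\equiv k_{2}\bmod \ell-1$, we may write $k-k_{1}=j(\ell-1)$ for some integer $j\ge0$. Set $g:=A^{j}f_{1}$. By Lemma \ref{hasseinvariant} the Hasse invariant $A$ is a mod $\ell$ modular form of type $(1,\ell-1,1)$, so $A^{j}$ is of type $(1,j(\ell-1),1)$ and the product $g=A^{j}f_{1}$ is a mod $\ell$ modular form of type $(N,k_{1}+j(\ell-1),\varepsilon)=(N,k,\varepsilon)$. Moreover, since the $q$-expansion of $A$ is identically $1$, the $q$-expansion of $g$ coincides with that of $f_{1}$; in particular $a_{m}(g)=a_{m}(f_{1})$ for every $m$. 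Thus $g$ and $f_{2}$ are now two forms of the \emph{same} type $(N,k,\varepsilon)$, i.e.\ global sections of $\underline{\omega}^{\otimes k}$ on $X_{1}(N)_{\overline{\mathbb{F}}_{\ell}}$, and by hypothesis $a_{m}(g)=a_{m}(f_{1})=a_{m}(f_{2})$ for all $m\le \frac{k[SL_{2}(\mathbb{Z}):\varGamma_{1}(N)]}{12}$.

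The core of the argument is then Sturm's bound applied to the single weight $k$: the difference $g-f_{2}$ is a mod $\ell$ modular form of type $(N,k,\varepsilon)$ whose $q$-expansion at the cusp $\infty$ vanishes up to the order $\frac{k[SL_{2}(\mathbb{Z}):\varGamma_{1}(N)]}{12}$, and one concludes that it vanishes identically. Hence $a_{m}(g)=a_{m}(f_{2})$, and therefore $a_{m}(f_{1})=a_{m}(f_{2})$, for all $m$. Since both $f_{1}$ and $f_{2}$ are normalized eigenforms, equality of all $q$-expansion coefficients forces $f_{1}=f_{2}$, which completes the proof.

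I expect the main obstacle to be justifying the single-weight vanishing statement directly in the Katz mod $\ell$ setting, since Sturm's original theorem \cite[Theorem 2]{sturm} is phrased for reductions of characteristic-zero forms, whereas $g-f_{2}$ is a genuine mod $\ell$ form that need not lift. The clean way around this is geometric: a nonzero global section of $\underline{\omega}^{\otimes k}$ on the complete modular curve $X_{1}(N)_{\overline{\mathbb{F}}_{\ell}}$ has total number of zeros equal to $\deg\underline{\omega}^{\otimes k}$, which is exactly $\frac{k[SL_{2}(\mathbb{Z}):\varGamma_{1}(N)]}{12}$, so an order of vanishing at $\infty$ exceeding this bound forces the section to be zero (with the usual minor care needed in the cases $N\le 4$, where forms are only sections over the complement of finitely many points). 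The remaining steps — compatibility of the type under multiplication by $A^{j}$ and the triviality of the $q$-expansion of $A$ — are routine given Lemma \ref{hasseinvariant}.
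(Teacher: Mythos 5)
Your proposal is correct and follows essentially the same route as the paper's own proof: multiply $f_{1}$ by the appropriate power $A^{(k_{2}-k_{1})/(\ell-1)}$ of the Hasse invariant to make both forms of type $(N,k,\varepsilon)$, observe that this leaves the $q$-expansion unchanged since $A$ has $q$-expansion $1$, and then conclude by Sturm's theorem. The only difference is that you also sketch a geometric justification (via the degree of $\underline{\omega}^{\otimes k}$ on $X_{1}(N)_{\overline{\mathbb{F}}_{\ell}}$) for why the Sturm bound is valid for genuine mod $\ell$ forms, a point the paper passes over by simply citing Sturm's theorem.
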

\begin{proof}\quad
	Let $A$ be the Hasse invariant. Without loss of generality, we suppose $k_{1}\le k_{2}$. Then by Lemma \ref{hasseinvariant}, the form $A^{(k_{2}-k_{1})/(\ell-1)} f_{1}$ is an eigenform of type $(N,k_{2},\varepsilon)$. we know $A=1$, and this implies that the form $f_{1}$ is also a form of type $(N,k_{2},\varepsilon)$. Since we have  $a_{m}(f_{1}) = a_{m}(f_{2})$ in $\overline{\mathbb{F}}_{\ell}$ for all $m$ with  $m \le \frac{k[SL_{2}(\mathbb{Z}):\varGamma_{1}(N)]}{12}$, it follows from Sturm's theorem that   $f_{1} = f_{2}$.
\end{proof}

The following well-known theorem takes an important role for our computations.

\begin{theorem} \label{twist} \quad
Let $f$ be a normalized eigenform of type $(N,k,\varepsilon)$, then there exist $i$ and $k'$ with $0\le i\le \ell-1, \ k'\le \ell+1$, and a normalized eigenform of type $(N,k',\varepsilon)$, such that $f=\theta^{i}g$.
\end{theorem}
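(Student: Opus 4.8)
The plan is to combine the behaviour of the operator $\theta$ on the weight filtration of mod $\ell$ modular forms with the identification criterion furnished by Proposition \ref{differentweights}. Write $f=\sum_{n\ge 1}a_n(f)q^n$ with $a_1(f)=1$, and for a nonzero mod $\ell$ form $h$ let $w(h)$ denote its filtration, i.e.\ the least weight in which its $q$-expansion occurs as the $q$-expansion of a mod $\ell$ form. By Lemma \ref{hasseinvariant}, $w(h)$ is well defined modulo $\ell-1$: multiplying by the Hasse invariant $A$ raises the weight by $\ell-1$ and leaves the $q$-expansion unchanged, so any form may be moved freely between weights in a fixed class mod $\ell-1$. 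The two facts I would quote from the theory of $\theta$ (\cite[Section 2.1]{katz}, \cite{serresd1}, \cite[Section 3]{ediweight}) are that $\theta$ sends a normalized eigenform of type $(N,k,\varepsilon)$ to a normalized eigenform of type $(N,k+\ell+1,\varepsilon)$, acting on $q$-expansions by $a_n\mapsto n a_n$, and that on $q$-expansions $\theta^{\ell}=\theta$, since $n^{\ell}\equiv n\pmod\ell$. The latter identity makes the sequence $\theta f,\theta^2 f,\dots$ periodic with period dividing $\ell-1$.

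The substance of the argument is the structure of this theta cycle, for which I would invoke the filtration theory of Jochnowitz and Edixhoven (\cite[Section 3]{ediweight}; for level one see also \cite{serresd1} and \cite{sd1}): the filtrations $w(\theta^j f)$ grow by $\ell+1$ at each step except when the current filtration is divisible by $\ell$, where a controlled drop occurs, and consequently $f$ itself can be written as $f=\theta^m g$ for some $m\ge 0$ and some normalized eigenform $g$ of filtration $w(g)\le \ell+1$. Reducing $m$ modulo the period $\ell-1$ by means of $\theta^{\ell}=\theta$ lets me take $0\le m\le \ell-1$, and I set $i=m$. Dividing $g$ by the appropriate power of $A$ replaces it by a normalized eigenform of type $(N,k',\varepsilon)$ with $k'=w(g)\le \ell+1$ and with the same $q$-expansion, so that $f=\theta^i g$ persists on $q$-expansions. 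Comparing weights on the two sides forces $k\equiv k'+i(\ell+1)\equiv k'+2i\pmod{\ell-1}$.

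Finally I would upgrade this $q$-expansion identity to an equality of forms by means of Proposition \ref{differentweights}. Both $f$ and $\theta^i g$ are normalized eigenforms of type $(N,\,\cdot\,,\varepsilon)$ whose weights are congruent modulo $\ell-1$ by the congruence just obtained, and by construction their $q$-expansions agree in every coefficient; in particular they agree up to the Sturm bound appearing in Proposition \ref{differentweights}, which then yields $f=\theta^i g$ with $0\le i\le \ell-1$ and $k'\le \ell+1$, as claimed.

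The one genuinely nontrivial ingredient is the theta-cycle structure used in the second step --- the precise way the filtration rises under $\theta$ and drops across a weight divisible by $\ell$, which is what guarantees a representative of filtration $\le\ell+1$ and lets one descend $f$ to it. This is exactly the input that does \emph{not} follow from the two elementary facts recalled in Section \ref{sec:modlforms}, and is where I would lean on the filtration theory of \cite{ediweight}; everything else is bookkeeping with the Hasse invariant together with one application of Proposition \ref{differentweights}.
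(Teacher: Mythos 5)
Your proposal is correct and follows essentially the same route as the paper: the paper's entire proof of this theorem is the citation to Edixhoven, \emph{The weight in Serre's conjectures on modular forms}, Theorem 3.4, and your sketch is precisely an unpacking of that theorem's theta-cycle/filtration argument, with the one genuinely nontrivial ingredient (the structure of the theta cycle and the existence of a representative of filtration at most $\ell+1$) delegated back to the same source, exactly as you acknowledge. The surrounding bookkeeping --- the Hasse invariant to move between congruent weights, $\theta^{\ell}=\theta$ to reduce $i$ into $\{0,\dots,\ell-1\}$, and Proposition \ref{differentweights} to pass from $q$-expansions to forms --- is sound and consistent with the tools the paper sets up in Section \ref{sec:modlforms}.
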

\begin{proof}\quad
See \cite[Theorem 3.4]{ediweight}.
\end{proof}

\section{Computing mod $\ell$ Galois representations for small $\ell$}\label{sec:algorithm}
In this section, we shall present the algorithm for computing mod $\ell$ Galois representations associated to modular forms of weight $k$ when $\ell <k-1$. 

\subsection{Galois representations and twists}

 P. Deligne \cite{deligne} proves  the following well known theorem:

\begin{theorem}[Deligne] \label{deligne} \quad
Let $f$ be an eigenform of type $(N,k,\varepsilon)$. Then there exists a continuous semi-simple representation
 \begin{equation} \label{galoisrep}
  \rho_{f}: Gal(\overline{\mathbb{Q}}|\mathbb{Q}) \rightarrow GL_{2}(\overline{\mathbb{F}}_{\ell}),
 \end{equation}
 that is unramified outside $N\ell$, and for all primes $p\nmid N\ell$ the characteristic polynomial of  $\rho_{f} (Frob_{p}) $ satisfies  in  $\overline{\mathbb{F}}_{\ell}$
\begin{equation} \label{charpol}
charpol( \rho_{f} (Frob_{p})) =  x^2-a_{p}(f)x+ \varepsilon(p)p^{k-1}.
\end{equation}
Moreover, $\rho_{f}$ is unique up to isomorphism.
\end{theorem}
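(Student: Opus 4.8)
The final statement is Deligne's theorem. Let me look at what needs to be proven.The final statement is Deligne's theorem (Theorem 3.1), which asserts the existence of a continuous semisimple Galois representation attached to a mod $\ell$ eigenform, with prescribed characteristic polynomials of Frobenius and uniqueness up to isomorphism. Let me sketch how I would approach its proof.

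\textbf{Plan of the proof.} The cleanest route is to reduce the mod $\ell$ statement to the characteristic-zero case and then reduce mod $\ell$. First I would lift the given mod $\ell$ eigenform $f$ of type $(N,k,\varepsilon)$ to a characteristic-zero eigenform. Concretely, the Hecke eigenvalues $a_p(f) \in \overline{\mathbb{F}}_\ell$ determine a maximal ideal $\mathfrak{m}$ in the relevant Hecke algebra acting on $S_k(\Gamma_1(N))$ (after possibly adjusting the weight using the operators $A$ and $\theta$ from the previous section to land in the range where classical forms exist, via the Deligne--Serre lifting lemma). By the Deligne--Serre lemma, the mod $\ell$ system of eigenvalues lifts to a system of eigenvalues of a genuine holomorphic cusp form $F$ with coefficients in the ring of integers $\mathcal{O}$ of some number field, with $a_p(F) \bmod \lambda = a_p(f)$ for a prime $\lambda \mid \ell$.

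\textbf{Constructing the representation in characteristic zero.} To $F$ one attaches, by the work of Eichler--Shimura (for $k=2$) and Deligne (for general $k \geq 2$), a continuous $\lambda$-adic representation
\begin{equation} \label{ladicrep}
\rho_{F,\lambda}: \operatorname{Gal}(\overline{\mathbb{Q}}|\mathbb{Q}) \rightarrow GL_2(\mathcal{O}_\lambda),
\end{equation}
unramified outside $N\ell$, such that for $p \nmid N\ell$ the characteristic polynomial of $\rho_{F,\lambda}(Frob_p)$ is $x^2 - a_p(F)x + \varepsilon(p)p^{k-1}$. This representation is realized in the $\lambda$-adic \'etale cohomology of a Kuga--Sato variety (a fibre power of the universal elliptic curve over the modular curve), and the Eichler--Shimura congruence relation is what pins down the Frobenius characteristic polynomial. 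I would cite this as the essential input rather than reproduce it.

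\textbf{Reduction mod $\ell$ and semisimplification.} Reducing $\rho_{F,\lambda}$ modulo the maximal ideal of $\mathcal{O}_\lambda$ gives a representation into $GL_2(\overline{\mathbb{F}}_\ell)$; its semisimplification $\rho_f$ is the desired representation. Reduction preserves the property of being unramified outside $N\ell$, and the characteristic polynomial of Frobenius reduces to $x^2 - a_p(f)x + \varepsilon(p)p^{k-1}$ in $\overline{\mathbb{F}}_\ell$, giving (\ref{charpol}). The main obstacle, and the point requiring care, is \emph{uniqueness}: the reduction mod $\ell$ of an integral lattice is only well defined up to semisimplification, so one must argue that the semisimple $\rho_f$ is determined by the eigenform. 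This follows from the Brauer--Nesbitt theorem together with Chebotarev density: the Frobenius elements $Frob_p$ for $p \nmid N\ell$ are dense, their characteristic polynomials are fixed by (\ref{charpol}), and a semisimple representation of a group is determined up to isomorphism by the characteristic polynomials of a dense set of elements. Hence any two semisimple representations satisfying (\ref{galoisrep}) and (\ref{charpol}) are isomorphic. This final uniqueness argument is the genuinely delicate step; everything prior is an appeal to the deep but standard construction of Deligne.
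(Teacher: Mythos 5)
The paper offers no proof of this statement at all: it is quoted as a known theorem, with the attribution ``P.~Deligne \cite{deligne} proves the following well known theorem,'' so there is no internal argument to compare against. Your sketch reconstructs the standard proof from the literature, and it is correct in outline: lift the mod $\ell$ system of Hecke eigenvalues to characteristic zero via the Deligne--Serre lemma, invoke Deligne's construction of the $\lambda$-adic representation in the \'etale cohomology of Kuga--Sato varieties (with the Eichler--Shimura congruence relation fixing the Frobenius characteristic polynomials), reduce modulo $\lambda$ and semisimplify, and obtain uniqueness from Brauer--Nesbitt together with Chebotarev density. Two points deserve slightly more care than your parentheses give them. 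First, the Deligne--Serre lemma lifts systems of eigenvalues, not forms, and for a Katz form of type $(N,k,\varepsilon)$ over $\overline{\mathbb{F}}_\ell$ the preliminary weight adjustment is innocuous only when done by multiplying by the Hasse invariant $A$ (whose $q$-expansion is $1$, so the eigenvalue system and hence the representation are unchanged); if one instead normalizes using the operator $\theta$, the attached representation acquires a twist by a power of the cyclotomic character, which must be undone at the end --- exactly the phenomenon $\rho_{\theta f} = \rho_f \otimes \chi_\ell$ that this paper exploits later, so the distinction matters here. Second, the $\lambda$-adic representation a priori takes values in $GL_2$ of a finite extension of $\mathbb{Q}_\ell$; one conjugates it to land in $GL_2(\mathcal{O}_\lambda)$ by a compactness/stable-lattice argument before reduction makes sense. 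Neither issue undermines the sketch: as an argument resting on Deligne's deep construction plus the Brauer--Nesbitt/Chebotarev uniqueness step, it is the right proof, and indeed the one the paper's cited sources carry out.
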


Let $f = \sum_{n>0} a_{n} (f) q^{n}$ be an eigenform. Then the eigenform $\theta f$ has $q$-expansion $\sum_{n>0} na_{n} (f) q^{n}$. It follows from the above theorem  that
$$  \rho_{\theta f}=  \rho_{f} \otimes \chi_{\ell}, $$
where $\chi$ is the mod $\ell$ cyclotomic character.

For an eigenform $f$ of type $(N,k,\varepsilon)$ with $\ell<k-1$, it follows from Theorem \ref{twist} that there exist an integer $i$ and an egienform $g$ of type $(N,k',\varepsilon)$ with $k' \le \ell+1$, such that 
$$\rho_{ f} \cong \rho_{g} \otimes \chi^{i}_{\ell}.$$ 
Moreover, we have the following theorem to determine such $i$ and $k'$,  which is a generalization of \cite[Proposition 2.5.18]{book} to not only level one forms but ones with arbitrary levels. 

\begin{theorem} \label{kandi} \quad
	Let $f_{1}$ and $f_{2}$ be two normalized eigenforms of type $(N,k_{1},\varepsilon)$ and $(N,k_{2},\varepsilon)$, respectively.  Then  $\rho_{ f_1}$ and  $\rho_{f_2} \otimes \chi^{i}_{\ell}$ are isomorphic if and only if $k_{1}\equiv k_{2}+2i \mod \ell-1$ and  $a_{p}(f_{1}) = p^{i}a_{p}(f_{2})$ in $\overline{\mathbb{F}}_{\ell}$ for all primes $p$ with $p\ne N\ell$ and  $p \le \frac{\ell(\ell+1)[SL_{2}(\mathbb{Z}):\varGamma_{1}(N)]}{12}$.
\end{theorem}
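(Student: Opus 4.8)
The plan is to absorb the twist into a genuine modular form and then compare characteristic polynomials of Frobenius. First I would set $h := \theta^{i} f_{2}$. By the computation $a_{n}(\theta^{i} f_{2}) = n^{i} a_{n}(f_{2})$ the coefficient condition $a_{p}(f_{1}) = p^{i} a_{p}(f_{2})$ reads simply $a_{p}(f_{1}) = a_{p}(h)$, while the remark after Theorem~\ref{deligne} gives $\rho_{h} = \rho_{f_{2}} \otimes \chi_{\ell}^{i}$. By the result of Katz recalled in Section~\ref{sec:modlforms}, $h$ is again a normalized eigenform, of type $(N, k_{2}+i(\ell+1), \varepsilon)$, and since $\ell+1 \equiv 2 \pmod{\ell-1}$ its weight is $\equiv k_{2}+2i \pmod{\ell-1}$. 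Thus the statement is equivalent to: $\rho_{f_{1}} \cong \rho_{h}$ iff $k_{1} \equiv k_{2}+i(\ell+1) \pmod{\ell-1}$ and $a_{p}(f_{1}) = a_{p}(h)$ for all $p \nmid N\ell$ below the stated bound.

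For the ``only if'' direction I would argue with traces and determinants. Both $\rho_{f_{1}}$ and $\rho_{h}$ are continuous and semisimple, so by Brauer--Nesbitt together with Chebotarev an isomorphism $\rho_{f_{1}} \cong \rho_{h}$ is equivalent to equality of the characteristic polynomials \eqref{charpol} at $Frob_{p}$ for every $p \nmid N\ell$. Matching the linear terms gives $a_{p}(f_{1}) = a_{p}(h) = p^{i} a_{p}(f_{2})$ for all such $p$, in particular below the bound. Matching the constant terms says $\det \rho_{f_{1}} = \varepsilon\,\chi_{\ell}^{k_{1}-1}$ and $\det \rho_{h} = \varepsilon\,\chi_{\ell}^{k_{2}+i(\ell+1)-1}$ coincide as characters; as $\chi_{\ell}$ has exact order $\ell-1$, this forces $k_{1} \equiv k_{2}+2i \pmod{\ell-1}$.

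For the ``if'' direction the idea is to upgrade the finite coincidence of coefficients to an equality of forms and then quote the previous paragraph. The weight congruence puts $f_{1}$ and $h$ in the setting of Proposition~\ref{differentweights}. Both being normalized eigenforms of nebentypus $\varepsilon$, their coefficients are multiplicative and satisfy $a_{p^{r+1}} = a_{p}a_{p^{r}} - \varepsilon(p)p^{w-1}a_{p^{r-1}}$; because the two weights agree modulo $\ell-1$, the constants $\varepsilon(p)p^{w-1}$ agree in $\overline{\mathbb{F}}_{\ell}$, so coincidence of the $a_{p}$ should propagate to coincidence of every $a_{m}$ up to the Sturm bound. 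In the range relevant here one has $k_{2} \le \ell+1$ and $0 \le i \le \ell-1$ (Theorem~\ref{twist}), so $h$ has weight at most $\ell(\ell+1)$; hence the Sturm bound $\tfrac{\max\{k_{1},\,k_{2}+i(\ell+1)\}\,[SL_{2}(\mathbb{Z}):\varGamma_{1}(N)]}{12}$ is dominated by $\tfrac{\ell(\ell+1)\,[SL_{2}(\mathbb{Z}):\varGamma_{1}(N)]}{12}$. Proposition~\ref{differentweights} would then give $f_{1} = h$, whence $\rho_{f_{1}} = \rho_{h} = \rho_{f_{2}} \otimes \chi_{\ell}^{i}$.

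The hard part is precisely this propagation, since the hypothesis constrains only the $a_{p}$ with $p \nmid N\ell$, whereas Proposition~\ref{differentweights} asks for all coefficients $a_{m}$ below the Sturm bound. One must therefore control the coefficients attached to primes dividing $N\ell$: at $p=\ell$ the constant $p^{w-1}$ vanishes in $\overline{\mathbb{F}}_{\ell}$ and the recursion degenerates, and for $p \mid N$ the eigenvalue $a_{p}$ is not determined by the prime-to-$N\ell$ data. Since the Galois representation only depends on the $Frob_{p}$ with $p \nmid N\ell$, I expect the cleanest resolution to run the comparison at the level of the prime-to-$N\ell$ Hecke eigensystems (equivalently, of the representations themselves) and to verify that $\tfrac{\ell(\ell+1)\,[SL_{2}(\mathbb{Z}):\varGamma_{1}(N)]}{12}$ is a legitimate bound for detecting such eigensystems; that verification is where the real work sits.
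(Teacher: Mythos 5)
Your ``only if'' direction coincides with the paper's own argument (equate linear and constant terms of the characteristic polynomials \eqref{charpol} at $\mathrm{Frob}_{p}$, using semisimplicity, Brauer--Nesbitt and Chebotarev), so that half is fine. The genuine gap is in the ``if'' direction, at the Sturm-bound step. Because you compare $f_{1}$ directly with $h=\theta^{i}f_{2}$, Proposition \ref{differentweights} demands agreement of coefficients up to $\max\{k_{1},\,k_{2}+i(\ell+1)\}\,[SL_{2}(\mathbb{Z}):\varGamma_{1}(N)]/12$, whereas the hypothesis of Theorem \ref{kandi} only supplies them up to $\ell(\ell+1)[SL_{2}(\mathbb{Z}):\varGamma_{1}(N)]/12$. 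Your justification that the latter dominates --- ``$k_{2}\le\ell+1$ and $0\le i\le\ell-1$ (Theorem \ref{twist})'' --- misreads that theorem: nothing in Theorem \ref{kandi} bounds $k_{1}$, $k_{2}$ or $i$ (indeed the whole point of the paper is the regime $\ell<k-1$), and Theorem \ref{twist} does not bound the weight of a \emph{given} form; it produces a \emph{different} eigenform $g$ of weight $\le\ell+1$ with $f=\theta^{j}g$, $0\le j\le\ell-1$. That is precisely the step the paper performs and you omit: it writes $f_{1}=\theta^{j}g_{1}$ and (in effect) $\theta^{i}f_{2}=\theta^{i+j_{2}}g_{2}$, and replaces the two forms by $f_{1}'=\theta^{j}g_{1}$ and $f_{2}'=\theta^{i'}g_{2}$, with the exponents in $[0,\ell-1]$, so that both resulting types $(N,k_{1}',\varepsilon)$, $(N,k_{2}',\varepsilon)$ satisfy $k_{1}',k_{2}'\le\ell(\ell+1)$ while the prime-to-$\ell$ eigenvalues are unchanged (using $p^{\ell-1}=1$ in $\overline{\mathbb{F}}_{\ell}$) and the attached representations are unchanged up to the relevant twist. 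Only after this weight reduction is the stated bound $\ell(\ell+1)[SL_{2}(\mathbb{Z}):\varGamma_{1}(N)]/12$ large enough to invoke Proposition \ref{differentweights}; without it, your argument breaks down whenever $k_{1}$ or $k_{2}+i(\ell+1)$ exceeds $\ell(\ell+1)$.

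The same weight reduction is also what resolves the difficulty you flagged at $p=\ell$ but left open: after it, $f_{1}'$ and $f_{2}'$ are images of positive powers of $\theta$, so all of their coefficients at indices divisible by $\ell$ vanish, and the comparison of $a_{m}$ for $\ell\mid m$ is immediate --- this is the paper's remark that $a_{\ell}(f_{1}')$ and $a_{\ell}(f_{2}')$ are ``divided by a positive power of $\ell$''. (Your other concern, the coefficients at primes $p\mid N$, is not treated in the paper's proof either; it is vacuous in the paper's applications, where $N=1$.) In short, your skeleton matches the paper's --- traces and determinants for one direction, a Sturm-type identification of forms for the other --- but the proposal is missing the paper's key device of passing to $\theta$-twists of weight at most $\ell(\ell+1)$ via Theorem \ref{twist}, and the central step is unjustified without it.
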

\begin{proof}\quad
	We first assume that $\rho_{ f_1}$ and  $\rho_{f_2} \otimes \chi^{i}_{\ell}$ are isomorphic. Then by  (\ref{charpol}), we have 
	 $$\varepsilon \chi_{\ell}^{k_{1}-1}= \varepsilon \chi_{\ell}^{k_{2}-1+2i}.$$
	 Hence we have  $k_{1}\equiv k_{2}+2i \mod \ell-1$. Moreover we have 
	 $$tr(\rho_{ f_1}(\mathrm{Frob}_{p}))=tr((\rho_{f_2} \otimes \chi^{i}_{\ell})(\mathrm{Frob}_{p}))$$ 
	 for any $p\ne \ell$, and hence we have  in $\overline{\mathbb{F}}_{\ell}$ 
	 $$a_{p}(f_{1}) = p^{i}a_{p}(f_{2})$$
	  for all primes $p$ with $p\ne N\ell$ and  $p \le \frac{\ell(\ell+1)[SL_{2}(\mathbb{Z}):\varGamma_{1}(N)]}{12}$.
	
	For the other direction, we assume that $k_{1}\equiv k_{2}+2i \mod \ell-1$ and  
	$$a_{p}(f_{1}) = p^{i}a_{p}(f_{2})$$
	 in $\overline{\mathbb{F}}_{\ell}$ for all primes $p$ with $p\ne N\ell$ and  $p \le \frac{\ell(\ell+1)[SL_{2}(\mathbb{Z}):\varGamma_{1}(N)]}{12}$.
	 
	It follows from Theorem \ref{twist} that there exist an integer $j$ with $0\le j \le \ell-1$ and a normalized eigenform $g_{1}$ of type  $(N,k_{g_{1}},\varepsilon)$ with  $k_{g_{1}}\le\ell+1$ such that $f_{1} = \theta^{j}g_{1}$ in $\overline{\mathbb{F}}_{\ell}$.
	This implies that we have a form $f'_{1}=\theta^{j}g_{1}$ of  type  $(N,k_{1}',\varepsilon)$ with $k_{1}'\le \ell(\ell+1)$ such that  $\rho_{ f_{1}}$ and  $\rho_{f_{1}'}$ are isomorphic. By (\ref{charpol}) we have 	\begin{equation}\label{kmod1}
     k_{1}'\equiv k_{1}\mod \ell-1.
	\end{equation}
	
	For the same reason, we have a form $f'_{2}$ of  type  $(N,k_{2}',\varepsilon)$ with $k_{2}'\le \ell(\ell+1)$ such that     $ a_{p}(f_{2}') = p^{i}a_{p}(f_{2})$ and  $\rho_{f_2'}$ is isomorphic to $\rho_{f_2} \otimes \chi^{i}_{\ell}$.  By the argument in the first paragraph of the proof, we have   
	\begin{equation}\label{kmod2}
	k_{2}'\equiv k_{2}+2i \mod \ell-1.
	\end{equation}
	
	Then by (\ref{kmod1}), (\ref{kmod2}) and the assumption, we have 
	$$k_{1}'\equiv k_{1} \equiv k_{2}+2i \equiv k_{2}' \mod \ell-1$$ and   in $\overline{\mathbb{F}}_{\ell}$
	$$a_{p}(f_{1}') = a_{p}(f_{1}) = p^{i}a_{p}(f_{2}) = a_{p}(f_{2}')$$
	 for all primes $p$ with $p\ne N\ell$ and  $p \le \frac{\ell(\ell+1)[SL_{2}(\mathbb{Z}):\varGamma_{1}(N)]}{12}$. 
	 
	 Moreover, we know that $a_{\ell}(f_{1}')$ and  $a_{\ell}(f_{2}')$ are congruent to $0$ modulo $\ell$ since they are divided by a positive power of $\ell$. Therefore, we have  $a_{m}(f_{1}') = a_{m}(f_{2}')$  in $\overline{\mathbb{F}}_{\ell}$ for all  $m$ with  $m \le \frac{\ell(\ell+1)[SL_{2}(\mathbb{Z}):\varGamma_{1}(N)]}{12}$. By Proposition \ref{differentweights}, we then have that  $f_{1}' = f_{2}'$ and therefore $\rho_{ f_{1}'}$ and  $\rho_{f_{2}'}$ are isomorphic. Hence  $\rho_{ f_1}$ and  $\rho_{f_2} \otimes \chi^{i}_{\ell}$ are isomorphic. This completes the proof.
\end{proof}

In \cite[Theorem 3.5]{bruin}, the author gives a proof of  a more elaborate  result, since the purpose of the thesis is to theoretically prove that the algorithm is in polynomial time. In our paper, we intend to do explicit computations and therefore the result in Theorem \ref{kandi} can meet our purpose.

\subsection{The Algorithm}
In this subsection, we shall describe the algorithm for computing the mod $\ell$ Galois representations associated to modular forms. In fact, we have the following result which is first proposed by S. Edixhoven and J.-M. Couveignes \cite{book} for modular forms of level one and then generalized  to forms of arbitrary levels by Bruin \cite{bruin}.

\textit{There exists a deterministic  algorithm that computes the $\mathrm{mod} \ \ell$ Galois representation associated to  modular forms in time polynomial in $\ell$.}

Let $f$ be a cuspidal  normalized eigenforms of type $(N,k,\varepsilon)$. If  $\ell<k-1$, Theorem \ref{twist} and \ref{kandi} allow us to explicitly obtain normalized eigenforms $f'$ of type $(N,k',\varepsilon)$ with $2\le k' \le \ell+1$ such that  $\rho_{f}$ and  $\rho_{f'} \otimes \chi^{i}_{\ell}$ are isomorphic. Thus the question boils down to the case of $2 \le k \leq \ell+1$.

In \cite[Theorem 2.2]{report}, the author shows that if $2 < k \leq \ell+1$ and $\rho_{f,\lambda}$ is ireducible, then there is a  cuspidal  normalized eigenforms $f_{2}$ of type $(N\ell,2,\varepsilon_2)$ such that $\rho_{f}$ is isomorphic to $\rho_{f_{2}}$. Therefore, for any $p \nmid N\ell$, this reduces the questions to the cases of $k=2$.

Now suppose that $\rho_{f}$ is a mod $\ell$ Galois representation associated to a  cuspidal  normalized eigenforms of type $(N,2,\varepsilon)$. Let $X_{1}(\ell)$ be the modular curve associated to $\varGamma_{1}(\ell)$ and let $J_{1}(\ell)$ denote its Jacobian. Denote $\mathbb T$ the subring of End$(J_{1}(\ell))$ generated by the Hecke operators $T_p$ over $\mathbb Z$. Then
$$\mathbb T=\mathbb{Z}[  T_{n}, \langle n\rangle : n \in \mathbb Z_{+} \ \mathrm{and} \ (n,\ell)=1].$$

Now define a ring homomorphism
$$\theta: \mathbb{T} \rightarrow \mathbb F_{\lambda} ,$$
given by 
$$\langle d \rangle \mapsto \varepsilon(d) \ \  and  \ \  T_{n} \mapsto a_{n}(f).$$

Let $\mathfrak m$ denote the maximal ideal ker$\theta$ and then  $\mathbb{T}/\mathfrak{m}\subset \overline{\mathbb{F}}_{\ell}$. Moreover we let
\begin{displaymath} 
V=J_{1}(\ell)(\overline{\mathbb{Q}})[\mathfrak{m}]=\{x \in J_{1}(\ell)(\overline{\mathbb{Q}}) \ | \ tx=0 \ \mathrm{for}  \ \mathrm{all} \ t \ \mathrm{in} \ \mathfrak m \}.
\end{displaymath} 
 Then we have
\begin{theorem} \label{v}  \quad
	The set $V$ is  a 2-dimensional $\mathbb{T}/\mathfrak{m}$-linear subspace of $J_{1}(\ell)(\overline{\mathbb{Q}})[\ell]$. Moreover,  the representation
	$$
	\rho: Gal (\mathbb{\overline{Q}}/\mathbb{Q}) \rightarrow \mathrm{Aut}(V)
	$$
	is isomorphic to the modular Galois representation $\rho_{f}$.
\end{theorem}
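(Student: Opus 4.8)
The statement packages two assertions --- that $\dim_{\mathbb{T}/\mathfrak{m}} V = 2$ and that the Galois action on $V$ reproduces $\rho_f$ --- and I would obtain both from the action of $\mathbb{T}$ on the $\ell$-adic Tate module of $J_1(\ell)$ together with the Eichler--Shimura congruence relation. The Hecke operators act as endomorphisms of $J_1(\ell)$, so $\mathbb{T}$ acts on $T_\ell J_1(\ell)$ and commutes with the natural $\mathrm{Gal}(\overline{\mathbb{Q}}/\mathbb{Q})$-action; reducing modulo $\ell$ makes $J_1(\ell)[\ell]$ a faithful $\mathbb{T}\otimes_{\mathbb{Z}}\mathbb{F}_\ell$-module carrying a $\mathbb{T}$-linear Galois action. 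Since $\ell\in\mathfrak{m}$, the module $V=J_1(\ell)(\overline{\mathbb{Q}})[\mathfrak{m}]$ is $\ell$-torsion, so it lies in $J_1(\ell)[\ell]$ as asserted. Because we work in the unexceptional case, $\rho_f$ is irreducible and $\mathfrak{m}$ is a non-Eisenstein maximal ideal of residue characteristic $\ell$; I would record this at the outset, since both steps below depend on it.

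For the dimension count, the plan is to localize at $\mathfrak{m}$ and invoke multiplicity one. The crucial input is that for a non-Eisenstein $\mathfrak{m}$ with $\ell\ge5$ the localized Hecke algebra $\mathbb{T}_\mathfrak{m}$ is Gorenstein and the localized Tate module $(T_\ell J_1(\ell))_\mathfrak{m}$ is free of rank $2$ over $\mathbb{T}_\mathfrak{m}$ (the theorem of Mazur, Boston--Lenstra--Ribet, Gross and Edixhoven). Granting this, $(J_1(\ell)[\ell])_\mathfrak{m}\cong(\mathbb{T}_\mathfrak{m}/\ell\mathbb{T}_\mathfrak{m})^{2}$, and since the Gorenstein property makes the socle of the local Artinian ring $\mathbb{T}_\mathfrak{m}/\ell\mathbb{T}_\mathfrak{m}$ one-dimensional over $\mathbb{T}/\mathfrak{m}$, taking $\mathfrak{m}$-torsion yields $V\cong(\mathbb{T}/\mathfrak{m})^{2}$. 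I expect this to be the genuine obstacle: the freeness of rank $2$ and the Gorenstein property really do use the irreducibility of $\rho_f$, and without that hypothesis the dimension of $V$ can exceed $2$.

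It remains to identify the Galois representation. Here I would use the Eichler--Shimura congruence relation on $X_1(\ell)$: for every prime $p\ne\ell$ the Frobenius acting on $T_\ell J_1(\ell)$ satisfies $\mathrm{Frob}_p^{2}-T_p\,\mathrm{Frob}_p+\langle p\rangle p=0$, so on the rank-$2$ module $(T_\ell J_1(\ell))_\mathfrak{m}$ the characteristic polynomial of $\mathrm{Frob}_p$ is $x^{2}-T_p\,x+\langle p\rangle p$. Reducing modulo $\mathfrak{m}$ and using $T_p\mapsto a_p(f)$ and $\langle p\rangle\mapsto\varepsilon(p)$, the characteristic polynomial of $\mathrm{Frob}_p$ on $V$ equals $x^{2}-a_p(f)\,x+\varepsilon(p)p$, which is exactly (\ref{charpol}) specialized to $k=2$. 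Thus the semisimplification of $V$ has the same $\mathrm{Frob}_p$-characteristic polynomials as $\rho_f$ at all $p\nmid N\ell$, so by Brauer--Nesbitt together with the Chebotarev density theorem $V^{\mathrm{ss}}\cong\rho_f$; since $\dim V=2=\dim\rho_f$ and $\rho_f$ is irreducible, $V$ is itself irreducible and $V\cong\rho_f$. The uniqueness clause of Deligne's Theorem \ref{deligne} then confirms that this is the representation attached to $f$, completing the argument.
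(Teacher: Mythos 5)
Your proposal is correct and is essentially the paper's own proof: the paper gives no argument of its own but simply cites \cite[Sections 3.2 and 3.3]{ribetstein}, and your sketch reproduces exactly the content of those sections --- Boston--Lenstra--Ribet semisimplicity together with the Mazur/Gross/Edixhoven multiplicity-one (Gorenstein plus rank-two freeness) theorem for the statement $\dim_{\mathbb{T}/\mathfrak{m}}V=2$, and the Eichler--Shimura congruence relation combined with Brauer--Nesbitt and Chebotarev to identify the Galois action on $V$ with $\rho_{f}$. A minor point in your favor: you make explicit the irreducibility (non-Eisenstein) hypothesis on which both steps rest, which the paper's statement leaves implicit (it is supplied by the surrounding context, namely the reduction to weight $2$ via Ribet's theorem, which assumes $\rho_{f,\lambda}$ irreducible, and the restriction to unexceptional primes).
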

\begin{proof}\quad
	See \cite[Section 3.2 and 3.3]{ribetstein}).
\end{proof}

	Let $L$ be the fixed field of the kernel ${\rm ker}(\rho_{f})$ of the Galois representation $\rho_{f}$. Then the representation $\rho_{f}$ can factor through as:
\begin{displaymath} 
\xymatrix{Gal (\mathbb{\overline{Q}}|\mathbb{Q}) \ar[rr]^{\rho_{f}}\ar[dr]_{\pi}  &   & GL_{2}(\overline{\mathbb{F}}_{\ell})  \\
	&  Gal ( L|\mathbb{Q}) \ar[ur]^{\phi} &
}
\end{displaymath}
where $\pi$ is the canonical restriction map and $\phi$ is the isomorphism between  Gal$( L|\mathbb{Q})$ and  the image ${\rm im}(\rho_{f})$ of $\rho_{f}$. To compute $\rho_{f}$, it suffices to compute a suitable polynomial  $P_{f}\in \mathbb Q[x]$ of degree $\ell^2-1$ with 
\begin{displaymath}
P_{f}(x) = \prod_{P\in V-\{0\}}(x-h(P))
\end{displaymath}
for some suitable function $h$ in the function field of $X_1(\ell)$. Here $h(P)=\sum_{i=1}^g h(P_i)$ where  $g$ is the genus of $X_1(\ell)$, and $P_i$ are the points on $X_1(\ell)$ such that each divisor $P\in V-\{0\}$ can be written as $\sum_{i=1}^g (P_i) -gO$.   In fact, it can be shown that fixed field of $\rho_{f}$ is actually  the splitting field of  $P_{f}\in \mathbb Q[x]$.

\subsection{Projective Galois representations} \label{subsec:algorithmproj}

Composed with the canonical projection map $GL_{2}(\mathbb{F}_{\lambda})\rightarrow PGL_{2}(\mathbb{F}_{\lambda})$, the representation $\rho_{f,\lambda}$ in (\ref{galoisrep}) gives a projective representation 
$$\tilde{\rho}_{f}: Gal(\overline{\mathbb{Q}}|\mathbb{Q}) \rightarrow PGL_{2}(\overline{\mathbb{F}}_{\ell}).$$

Now we apply Theorem \ref{kandi} to the case of projective representation and then we have

\begin{theorem} \label{kandiofproj} \quad
	Let $f_{1}$ and $f_{2}$ be two normalized eigenforms of type $(N,k_{1},\varepsilon)$ and $(N,k_{2},\varepsilon)$, respectively. Suppose that $k_{1}\equiv k_{2}+2i \mod \ell-1$ and  $a_{p}(f_{1})  = p^{i}a_{p}(f_{2})$ in $\overline{\mathbb{F}}_{\ell}$ for all primes $p$ with $p\ne N\ell$ and  $p \le \frac{\ell(\ell+1)[SL_{2}(\mathbb{Z}):\varGamma_{1}(N)]}{12}$. Then  $\tilde{\rho}_{ f_1}$ and  $\tilde{\rho}_{f_2} $ are isomorphic.
\end{theorem}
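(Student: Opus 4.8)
The plan is to reduce everything to Theorem \ref{kandi} and then observe that the twist by a scalar-valued character becomes invisible after projection to $PGL_2$.

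First I would note that the hypotheses of the present statement are precisely the arithmetic conditions appearing in the ``if'' direction of Theorem \ref{kandi}: namely $k_{1}\equiv k_{2}+2i \mod \ell-1$ together with $a_{p}(f_{1}) = p^{i}a_{p}(f_{2})$ in $\overline{\mathbb{F}}_{\ell}$ for all primes $p\ne N\ell$ with $p \le \frac{\ell(\ell+1)[SL_{2}(\mathbb{Z}):\varGamma_{1}(N)]}{12}$. Hence Theorem \ref{kandi} applies directly and yields an isomorphism of linear representations
$$\rho_{f_{1}} \cong \rho_{f_{2}} \otimes \chi^{i}_{\ell}.$$

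Next I would pass to the projective representations. By definition $\tilde{\rho}_{f}$ is the composite of $\rho_{f}$ with the canonical projection $\pi: GL_{2}(\overline{\mathbb{F}}_{\ell}) \rightarrow PGL_{2}(\overline{\mathbb{F}}_{\ell})$. Since $\chi^{i}_{\ell}$ takes values in $\overline{\mathbb{F}}_{\ell}^{\ast}$, for every $g \in Gal(\overline{\mathbb{Q}}|\mathbb{Q})$ the matrix $(\rho_{f_{2}} \otimes \chi^{i}_{\ell})(g) = \chi^{i}_{\ell}(g)\,\rho_{f_{2}}(g)$ differs from $\rho_{f_{2}}(g)$ only by the scalar $\chi^{i}_{\ell}(g)$. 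As scalar matrices lie in the kernel of $\pi$, the two matrices have the same image in $PGL_{2}(\overline{\mathbb{F}}_{\ell})$, so $\pi \circ (\rho_{f_{2}} \otimes \chi^{i}_{\ell}) = \pi \circ \rho_{f_{2}}$, that is, $\widetilde{\rho_{f_{2}} \otimes \chi^{i}_{\ell}} = \tilde{\rho}_{f_{2}}$. Combining this with the isomorphism furnished by Theorem \ref{kandi} gives
$$\tilde{\rho}_{f_{1}} \cong \widetilde{\rho_{f_{2}} \otimes \chi^{i}_{\ell}} = \tilde{\rho}_{f_{2}},$$
which is the desired assertion.

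I do not expect any serious obstacle, since the entire arithmetic content is already carried by Theorem \ref{kandi}, and the remaining step is the elementary fact that tensoring by a one-dimensional character does not affect the projectivization. The only point requiring a line of care is the compatibility of the isomorphism with $\pi$: because $\pi$ is a group homomorphism, any intertwiner realizing $\rho_{f_{1}} \cong \rho_{f_{2}} \otimes \chi^{i}_{\ell}$ descends through $\pi$ to an intertwiner of the projective representations, so the linear isomorphism indeed induces $\tilde{\rho}_{f_{1}} \cong \widetilde{\rho_{f_{2}} \otimes \chi^{i}_{\ell}}$, and the proof is complete.
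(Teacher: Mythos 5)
Your proposal is correct and follows essentially the same route as the paper: invoke Theorem \ref{kandi} to get $\rho_{f_1}\cong\rho_{f_2}\otimes\chi_\ell^i$, then observe that the scalar-valued twist $\chi_\ell^i$ lies in the kernel of the projection to $PGL_2(\overline{\mathbb{F}}_\ell)$, so $\widetilde{\rho_{f_2}\otimes\chi_\ell^i}=\tilde\rho_{f_2}$. If anything, your closing remark that the intertwiner descends through the projection makes explicit a step the paper leaves implicit, which is a small improvement rather than a divergence.
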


\begin{proof}\quad
It follows from Theorem \ref{kandi} that $\tilde{\rho}_{ f_1}$ and  $\tilde{\rho}_{f_2} \otimes \chi^{i}_{\ell}$ are isomorphic. For any $\sigma \in$  Gal$(\overline{\mathbb{Q}}|\mathbb{Q})$, we have 
$$\rho_{f_2} \otimes \chi^{i}_{\ell}(\sigma)=\rho_{f_2}(\sigma) \cdot\chi^{i}_{\ell}(\sigma).$$
 In $PGL_{2}(\mathbb{F}_{\lambda})$, we have 
 $$\overline{\rho_{f_2}(\sigma)}=\overline{\rho_{f_2}(\sigma) \cdot\chi^{i}_{\ell}(\sigma)},$$
  and hence  $\tilde{\rho}_{f_2} \otimes \chi^{i}_{\ell}=\tilde{\rho}_{f_2}$. Here, as usual, the bar denotes the quotient by the subgroup of $GL_{2}(\overline{\mathbb{F}}_{\ell})$ consisting of scalar matrices. This implies $\tilde{\rho}_{ f_1}$ and  $\tilde{\rho}_{f_2} $ are isomorphic.
\end{proof}

Now we can describe the algorithm for computing the projective Galois representation $\tilde{\rho}_{f}$ associated to  an normalized eigenform of type $(N,k,\varepsilon)$ with $\ell<k-1$.

First, by Theorem \ref{twist} and \ref{kandiofproj}, we can  explicitly obtain a normalized eigenform $f'$ of type $(N,k',\varepsilon)$ with $2\le k' \le \ell+1$ such that   $\tilde{\rho}_{ f}$ and  $\tilde{\rho}_{f'} $ are isomorphic. Thus our computations boil down to the case of $2 \le k \leq \ell+1$. Then again we can reduce the question to weight $2$ as the same arguments in the previous section. Finally we can compute a suitable polynomial instead for the following reason.

	Let  $K$ be the fixed field of ${\rm ker}(\tilde{\rho}_{f})$, the representation $\tilde{\rho}_{f}$ can factor through as:
\begin{displaymath} 
\xymatrix{
	& GL_{2}(\mathbb{F}_{\lambda}) \ar[dr] & \\
	Gal (\mathbb{\overline{Q}}|\mathbb{Q}) \ar[rr]^{\tilde{\rho}_{f,\lambda}}\ar[dr]_{\pi}  \ar[ur]^{\rho_{f,\lambda}}&   & PGL_{2}(\mathbb{F}_{\lambda})  \\
	&  Gal ( K|\mathbb{Q}) \ar[ur]^{\varphi} &
}
\end{displaymath}
where $\pi$ is the canonical restriction map and $\varphi$ is the isomorphism between  Gal$( L|\mathbb{Q})$ and  ${\rm im}(\tilde{\rho}_{f})$.
Let $V=J_{1}(\ell)(\overline{\mathbb{Q}})[\mathfrak{m}]$ be the 2-dimensional $\mathbb{T}/\mathfrak{m}$-linear subspace of $J_{1}(\ell)(\overline{\mathbb{Q}})[\ell]$ as in Theorem \ref{v}. Then the  projective line $\mathbb P(V)$ has $\ell+1$ points,  and it follows that the  fixed field of $\tilde{\rho}_{f}$ is in fact  the splitting field of a certain polynomial $\tilde P_{f}\in Q[x]$ of degree $\ell+1$ which is given by
\begin{equation}\label{projectivepolynomial}
\tilde P_{f}(x) = \prod_{A\subset \mathbb P(V)}(x-\sum_{P\in A-\{0\}}h(P)).
\end{equation}

\section{Examples}

	J. Bosman first did practical computations and obtained $\tilde P_{f}$ for modular forms $f$ of level 1 and of weight $k\le 22$, with $\ell \leq 23$. Recently, others improved the algorithm and computed the polynomials for more cases. See \cite{peng} and \cite{martin} for instance. As far as we know, all the polynomials  $\tilde P_{\Delta_{k},\ell}$ that have been computed in this method are shown in \cite[Section 7.5]{book} and \cite[Table 4]{martin}.  
	
	Note that all the computed polynomials are of the case with $k\le \ell+1$. In this section, we shall apply the algorithm described in Subsection \ref{subsec:algorithmproj} to compute some  polynomials associated to the Galois representations when $\ell<k-1$.

\subsection{Finding $\theta$-twist forms}
For $k=16,20,22$ and $26$, let $\Delta_k = \sum_{n>0}^{\infty}a_{n}q^{n}$ denote the unique cusp form of level $1$ and weight $k$. For a prime $\ell$, we let  $\tilde{\Delta}_k = \sum_{n>0}^{\infty}\tilde{a}_{n}q^{n}$, where $\tilde{a}_{n}$ means the reduction of $a_{n}$ mod $\ell$.  Then $\tilde{\Delta}_k$ is  a normalized cuspidal eigenform of type $(1,k,1)$. We denote by  $P_{\Delta_{k},\ell}$ the polynomial $P_{\Delta_k}(x)$ defined in (\ref{projectivepolynomial}) which describes the mod $\ell$ Galois representation associated to  $\tilde{\Delta}_k$.

 A prime $\ell$ is said to be exceptional if the image of $\rho_{f}$ does not contain $SL_{2}(\mathbb{F}_{\ell})$.
In the following table we list the unexceptional primes for $\Delta_k$  with $\ell<k-1$. Then for the  $(k,\ell)$  in the table, we shall compute the polynomials $\tilde P_{\Delta_{k},\ell}$.

\makeatletter\def\@captype{table}\makeatother

\begin{center}
	\begin{tabular}{|c|c|} 
		\hline
		$k$&$\ell$\\
		\hline
		$16$&13 \\
		\hline
		$20$&17\\
		\hline
		\multirow{2}{*}{22} & 11\\
		\cline{2-2}
		& 19 \\
		\hline
		\multirow{2}{*}{26} & 13 \\
		\cline{2-2}
		& 23 \\
		\hline
	\end{tabular}
       \caption{Small unexceptional primes for $\Delta_{k}$}
	\label{table:smallprimes}
	
\end{center}

For $\Delta_{k}$ and unexceptional prime $\ell$ with $(k,\ell)$ in Table \ref{table:smallprimes}, we apply Theorem \ref{kandiofproj} to find  normalized eigenforms $f$ of type  $(1,k',1)$ with $k'<\ell-1$ such that  $\tilde{\rho}_{\Delta_{k}}$ and  $\tilde{\rho}_{f} $ are isomorphic. We first find $k'$ and $i$ such that 
    $$k\equiv k'+2i \mod \ell-1.$$
then use SAGE \cite{sage} to verify $a_{p}(f_{1}) \equiv p^{i}a_{p}(f_{2}) \mod \ell$ for all primes $p$ with $p\ne N\ell$ and $p \le \frac{\ell(\ell+1)}{12}$.  In fact by Theorem \ref{twist}, such $k'$ and $i$ do exist and after doing some simple calculations we explicitly obtain the forms that satisfy the conditions in Theorem \ref{kandiofproj}. More precisely, it follows 
\begin{prop} \quad
	We  have the congruences
\begin{displaymath} \label{twistforms}
\Delta_{k} \equiv \theta^{i}\Delta_{k'} \mod \ell, 
\end{displaymath} 
where $k,\ell,i,k'$ are given by the following table.

\makeatletter\def\@captype{table}\makeatother

\begin{center}
		\begin{tabular}{|c|c|c|c|}
			\hline
			$k$&$\ell$& $i$&$k'$\\
				\hline
			$16$&$13$& $2$  &$12$ \\
				\hline
			$20$&$17$& $2$  &$16$ \\
				\hline
			\multirow{2}{*}{$22$} &$11$&$1$ & $12$\\
			\cline{2-4}
	         & $19$ & $2$ &$18$\\
           \hline
           \multirow{2}{*}{$26$} & $13$ &$1$ & $12$\\
           \cline{2-4}
           & $23$ & $2$ &$22$\\
           \hline
        \end{tabular}
       \caption{}
	\label{table:twistforms}
\end{center}
\end{prop}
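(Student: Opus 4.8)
The plan is to prove each row of the table by applying Proposition \ref{differentweights} directly to the pair of normalized eigenforms $\Delta_k$ and $\theta^i\Delta_{k'}$, since the asserted congruence $\Delta_k\equiv\theta^i\Delta_{k'}\pmod\ell$ is precisely an equality of mod $\ell$ forms and Proposition \ref{differentweights} is exactly the tool that deduces such an equality from finitely many coefficient comparisons. The existence of admissible data $(i,k')$ with $k'\le\ell+1$ is guaranteed by Theorem \ref{twist}; the table merely records the explicit values, which I would locate by a short search.

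First I would pin down the types of the two forms. By assumption $\Delta_k$ is a normalized cuspidal eigenform of type $(1,k,1)$. Iterating the weight formula for $\theta$ from Section \ref{sec:modlforms} (one application raises the weight by $\ell+1$ and leaves the character unchanged), the form $\theta^i\Delta_{k'}$ is a normalized eigenform of type $(1,\,k'+i(\ell+1),\,1)$ whose $q$-expansion is $\sum_{n>0}n^i a_n(\Delta_{k'})\,q^n$. Hence both sides are normalized eigenforms of level one and trivial nebentypus, and only the weight and the coefficients remain to be compared.

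Next I would check the weight hypothesis of Proposition \ref{differentweights}. Because $\ell+1\equiv 2\pmod{\ell-1}$, the weight of $\theta^i\Delta_{k'}$ satisfies $k'+i(\ell+1)\equiv k'+2i\pmod{\ell-1}$, so the required congruence $k\equiv k'+i(\ell+1)\pmod{\ell-1}$ is exactly $k\equiv k'+2i\pmod{\ell-1}$; this is also the relation forced by equating the determinant characters $\chi_\ell^{\,k-1}$ and $\chi_\ell^{\,k'-1+2i}$ coming from (\ref{charpol}). I would verify it on each row. Writing $K=\max\{k,\,k'+i(\ell+1)\}$, Proposition \ref{differentweights} then reduces the statement to checking $a_m(\Delta_k)=m^i a_m(\Delta_{k'})$ in $\overline{\mathbb{F}}_{\ell}$ for all $m\le K/12$, the index $[SL_2(\mathbb{Z}):\varGamma_1(1)]$ being $1$. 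In every row $i\le 2$, so $K\le 70$ and $K/12<6$; in particular $K/12<\ell$ throughout, so the index $m=\ell$ never enters the range and no coefficient $a_\ell$ has to be examined. I would carry out this finite comparison in SAGE from the explicit $q$-expansions of $\Delta_k$ and $\Delta_{k'}$, and conclude $\Delta_k=\theta^i\Delta_{k'}$ in $\overline{\mathbb{F}}_{\ell}[[q]]$.

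The conceptual work is already contained in Proposition \ref{differentweights} and in the weight formula for $\theta$, so what remains is bookkeeping together with a short machine calculation. The one point genuinely demanding care is the weight congruence $k\equiv k'+2i\pmod{\ell-1}$: one must use the \emph{true} weight $k'+i(\ell+1)$ of $\theta^i\Delta_{k'}$ (rather than $k'$) both in this congruence and in the Sturm bound $K/12$, and one should confirm that the prescribed $i$ on each row actually satisfies the congruence, since this is a prerequisite for the coefficient comparison to be meaningful. Once it is in place the agreement of the few low-order coefficients is immediate, and Proposition \ref{differentweights} yields the stated equality of forms. Should one instead want only the projective statement $\tilde{\rho}_{\Delta_k}\cong\tilde{\rho}_{\Delta_{k'}}$, the same coefficient data feeds directly into Theorem \ref{kandiofproj}.
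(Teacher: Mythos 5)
Your method is sound and is genuinely leaner than the paper's own route. The paper proves the table by checking the hypotheses of Theorem \ref{kandiofproj}: the congruence $k\equiv k'+2i \pmod{\ell-1}$ together with a SAGE verification of $a_p(\Delta_k)\equiv p^i a_p(\Delta_{k'}) \pmod{\ell}$ at \emph{all primes} $p\le \ell(\ell+1)/12$; that large bound is inherited from Theorem \ref{kandi}, whose proof detours through Theorem \ref{twist}, compares forms of weight up to $\ell(\ell+1)$, and must dispose of the coefficient $a_\ell$ by a separate remark. You bypass all of this by applying Proposition \ref{differentweights} directly to the pair $\Delta_k$ and $\theta^i\Delta_{k'}$, using the latter's true weight $k'+i(\ell+1)\le 70$, so that only the coefficients $a_m$ with $m\le 5$ need comparing and the index $m=\ell$ never enters the range. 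Both arguments ultimately rest on the same key lemma (Sturm, in the form of Proposition \ref{differentweights}), but your finite check is far smaller, and it proves the asserted identity of $q$-expansions directly rather than extracting it from a statement about representations.

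However, the prerequisite you rightly insist on checking exposes a genuine problem: for the row $(k,\ell,i,k')=(22,11,1,12)$ the congruence $k\equiv k'+2i\pmod{\ell-1}$ reads $22\equiv 14\pmod{10}$, which is false, and no amount of coefficient checking can repair it — indeed $a_2(\Delta_{22})=-288\equiv 9\pmod{11}$ while $2\,a_2(\Delta_{12})=-48\equiv 7\pmod{11}$, so $\Delta_{22}\not\equiv\theta\Delta_{12}\pmod{11}$. The correct entry for that row is $i=0$: since $E_{10}=1-264\sum_{n\ge1}\sigma_9(n)q^n\equiv 1\pmod{11}$, one has $\Delta_{22}=\Delta_{12}E_{10}\equiv\Delta_{12}\pmod{11}$, consistent with $22\equiv 12\pmod{10}$. (This slip does not propagate to Table \ref{table:polynomials}, because the projective representation is insensitive to twisting by $\chi^{i}_{\ell}$, so $\tilde{\rho}_{\Delta_{22}}\cong\tilde{\rho}_{\Delta_{12}}$ mod $11$ in any case.) The remaining five rows pass both your congruence test and the coefficient test — at $m=2$, for instance, $216\equiv 4(-24)\pmod{13}$, $456\equiv 4\cdot 216\pmod{17}$, $-288\equiv 4(-528)\pmod{19}$, $-48\equiv 2(-24)\pmod{13}$, and $-48\equiv 4(-288)\pmod{23}$ — so your argument, once the small machine computation is run, establishes those rows. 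The gap is therefore not in your method but in the statement itself: as printed, the third row of Table \ref{table:twistforms} is false, and your proposal (like the paper's own verification procedure, which claims to check the same congruence) necessarily fails there.
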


\subsection{The polynomials}

It follows from Theorem \ref{kandiofproj} that $\tilde{\rho}_{ \Delta_{k}}$ and  $\tilde{\rho}_{\Delta_{k'}} $ are isomorphic and hence we have  
  $$\tilde P_{\Delta_{k},\ell}(x)= \tilde P_{\Delta_{k'},\ell}(x).$$ 
Fortunately, all the polynomials $\tilde P_{\Delta_{k'},\ell}(x)$ have been computed and shown in \cite[Section 7.5]{book}.
As a result, the corresponding polynomials $\tilde P_{\Delta_{k},\ell}(x)$ associated to the mod $\ell$ projective Galois representation  $\tilde{\rho}_{ \Delta_{k}}$ are shown in the following table.

\newpage

\renewcommand\arraystretch{1.8}

\begin{longtable}{|c|c|}
	
	\hline  
	($k$,$\ell$) &  $\tilde{P}_{\varDelta_{k},\ell}$   \\ [5pt] \hline
	(16, 13) &
	\begin{minipage}[t]{0.8\textwidth}
		$x^{14}+7*x^{13}+26*x^{12}+78*x^{11}+169*x^{10}+52*x^{9}-702*x^{8}-1248*x^{7}+494*x^{6}+2561*x^{5}+312*x^{4}-2223*x^{3}+169*x^{2}+506*x-215$
	\end{minipage}  \\  [15pt] \hline
	(20,17)&
	\begin{minipage}[t]{0.8\textwidth}
		$x^{18}-2*x^{17}-17*x^{15}+204*x^{14}-1904*x^{13}+3655*x^{12}+5950*x^{11}-3672*x^{10}-38794*x^{9}+19465*x^{8}+95982*x^{7}-280041*x^{6}-206074*x^{5}+455804*x^{4}+946288*x^{3}-1315239*x^{2}+606768*x-378241$
	\end{minipage} \\  [30pt] \hline
	(22,11)&
	\begin{minipage}[t]{0.8\textwidth}
		$x^{12}-4*x^{11}+55*x^{9}-165*x^{8}+264*x^{7}-341*x^{6}+330*x^{5}-165*x^{4}-55*x^{3}+99*x^{2}-41*x-111$
	\end{minipage} \\   [15pt] \hline
	(22,19)&
	\begin{minipage}[t]{0.8\textwidth}
		$x^{20}+10*x^{19}+57*x^{18}+228*x^{17}-361*x^{16}-3420*x^{15}+23446*x^{14}+88749*x^{13}-333526*x^{12}-1138233*x^{11}+1629212*x^{10}+13416014*x^{9}+7667184*x^{8}-208954438*x^{7}+95548948*x^{6}+593881632*x^{5}-1508120801*x^{4}-1823516526*x^{3}+2205335301*x^{2}+1251488657*x-8632629109$
	\end{minipage} \\  [40pt] \hline
	(26,13)&
\begin{minipage}[t]{0.8\textwidth}
	$x^{14}+7*x^{13}+26*x^{12}+78*x^{11}+169*x^{10}+52*x^{9}-702*x^{8}-1248*x^{7}+494*x^{6}+2561*x^{5}+312*x^{4}-2223*x^{3}+169*x^{2}+506*x-215$
\end{minipage} \\  [15pt] \hline	

(26,23)&
\begin{minipage}[t]{0.8\textwidth}
$x^{24}-11*x^{23}+46*x^{22}-1127*x^{20}+6555*x^{19}-7222*x^{18}-140737*x^{17}+1170700*x^{16}-2490371*x^{15}-16380692*x^{14}+99341324*x^{13}+109304533*x^{12}-2612466661*x^{11}+4265317961*x^{10}+48774919226*x^{9}-244688866763*x^{8}-88695572727*x^{7}+4199550444457*x^{6}-10606348053144*x^{5}-25203414653024*x^{4}+185843346182048*x^{3}-228822955123883*x^{2}-1021047515459130*x+2786655204876088$
\end{minipage} \\  [55pt] \hline
	
	\caption{Polynomials}
	\label{table:polynomials}
\end{longtable}

\section*{Acknowledgements}
This work was supported by National Natural Science Foundation of China (Grant No. 11601153) and  Fundamental Research Funds for the Central Universities (Grant No. 222201514319).



\begin{thebibliography}{99}


\bibitem{bruin} Bruin P, Modular curves, Arakelov theory, algorithmic applications, Ph.D. thesis, Universiteit Leiden, 2008

\bibitem{deligne} Deligne P, Formes modulaires et repr\'{e}sentations $\ell$-adiques, {Lecture Notes in Mathematics}, 1971, 179:139-172 %

\bibitem{martin} Derickx M, van Hoeij M, Zeng J, Computing Galois representations and equations for modular curves $X_H(\ell)$, \url{http://arxiv.org/abs/1312.6819}.

\bibitem{ediweight} Edixhoven S J, The weight in Serre's conjectures on modular forms, {Invent. Math}, 1992, 109: 563-594. %

\bibitem{book} Edixhoven S J, Couveignes J-M, de Jong R S,  Merkl F, Bosman J G, Computational Aspects of Modular Forms and Galois Representations, Ann. of Math. Stud., 176, Princeton Univ. Press, Princeton, 2011 %


\bibitem{gross} Gross B H, A tameness criterion for Galois representations associated to modular forms (MOD $p$), {Duke Mathematical Journal}, 1990, 61:445-517 %

\bibitem{katzmodlforms} Katz N M, $p$-adic properties of modular schemes and modular forms, {Lecture Notes in Mathematics}, 1973, 350:69-190 %

\bibitem{katz} Katz N M, A result on modular forms in characteristic $p$, {Lecture Notes in Mathematics}, 1976, 601:53-61.


\bibitem{mascot} Mascot N, Computing modular Galois representations, {Rendiconti
del Circolo Matematico di Palermo}, 2013, 62:451-476


\bibitem{report} Ribet K A, Report on mod $\ell$ representations of $Gal(\bar{\mathbb{Q}}/\mathbb{Q})$, Motives (Seattle, WA, 1991), {Amer. Math. Soc., Providence, RI}, 1994, 639-676.

\bibitem{ribetstein} Ribet K A and  Stein W A, Lectures on Serre's conjectures, Arithmetic algebraic
geometry (Park City, UT, 1999), {Amer. Math. Soc., Providence, RI}, 2001, 143-232.

\bibitem{sage} SAGE, Open source mathematics software, \url{http://sagemath.org}

\bibitem{serresd1} Serre J-P, Formes modulaires et fonctions z$\hat{e}$ta p-adiques, {Lecture Notes in Mathematics}, 1973, 350:191-26 %

\bibitem{sturm} Sturm J, On the congruence of modular forms, {Lecture Notes in Mathematics}, 1987, 1240:275-280.
	
\bibitem{sd1} Swinnerton-Dyer H P F, On $\ell$-adic representations and congruences for coefficients of modular forms (I), {Lecture Notes in Mathematics}, 1973, 350:1-55 %

\bibitem{peng} Tian P. Computations of Galois representations associated to modular forms of level one,
{Acta Arithmetica}, 2014,  164:399-412  %







\end{thebibliography}
\end{document}